\def\d{\partial}
\def\M{\mathcal M}
\def\PP{\mathbb{P}}
\DeclareMathSymbol{\Delta}{\mathalpha}{operators}{1}
\newtheorem{thm}{Theorem}
\newtheorem{prop}[thm]{Proposition}
\newtheorem{defin}[thm]{Definition}
\begin{document}

\title{Simplicial equations for the moduli space\\ of stable rational curves}

\author[J. Maya and J. Mostovoy]{Joaquin Maya         \and
        Jacob Mostovoy
}




\maketitle

\begin{abstract}
In this, largely expository, note, we show how the simplicial structure of the moduli spaces of stable rational curves with marked points allows to produce explicit equations for these spaces. The key argument is an elementary combinatorial statement about the sets of trees with marked leaves.

\end{abstract}

\section{Introduction: $\Delta$-sets}
\label{intro}

There exists a convenient combinatorial notion which allows to encode the structure of a triangulated topological space; namely, that of a \emph{$\Delta$-set} (Rourke, Sanderson 1971).  A $\Delta$-set $X$ is a sequence of sets $X_0, X_1, X_2, \ldots$ together with the maps 
$$
\d_{i}:X_{{n}}\rightarrow X_{n-1},
$$
which are defined for all $n>0$ and $i = 0,1,..., n$, and satisfy
\begin{equation}\label{simplicial}
\d_{i}\circ \d_{j} = \d_{j-1}\circ \d_{i}
\end{equation}
whenever $i < j$. This definition is a simplification of the standard definition of a \emph{simplicial set}, a fundamental notion in algebraic topology and homological algebra, see, for instance, (May 1967) or (Weibel, 1994)


Given a simplicial complex $\mathcal{K}$ with a totally ordered set of vertices, let $X_n$ be the set of all $n$-dimensional simplices of $\mathcal{K}$. For $x\in X_n$ define $\d_i(x)$ to be the $(n-1)$-dimensional face of the simplex $x$ which does not contain the $i$th vertex of $x$. The identities (\ref{simplicial}) are then satisfied and $\mathcal{K}$ gives rise to a $\Delta$-set $X$. Not all $\Delta$-sets come from simplicial complexes; the simplest example is the $\Delta$-set $O$ such that $O_0$ and $O_1$ are one-point sets and $O_n$ is empty for $n>1$.

\begin{defin}We say that a $\Delta$-set $X$ is {uniquely fillable in dimension $n$} if for each sequence
$(x_0, x_1,\ldots, x_n)$ of elements of $X_{n-1}$ that satisfies
$$\d_{i}(x_j) = \d_{j-1}(x_{i})$$
for all $0\leq i < j\leq n$, there exists a unique element $y\in X_n$ with $\d_i(y)=x_i$. 
\end{defin}

If $X$ is uniquely fillable in dimension $n$, the set $X_n$ can be given by the system of equations $\d_{i}(x_j) = \d_{j-1}(x_{i})$ inside the product of $n+1$ copies of $X_{n-1}$. In this note we shall see that this observation can be used in order to produce the equations for various algebraic varieties such as the Deligne-Mumford compactification $\overline{\M}_{0,n}$ of the moduli space of rational curves with $n$ marked points. The main argument is, actually, a combinatorial statement about certain sets of trees.

\section{The $\Delta$-set of trees with marked leaves}
\label{sec:1}

For $n\geq 0$, let $T_{n}$ be the set of all trees without bivalent vertices whose leaves are labelled by the numbers from $0$ to $n$. In particular, $T_0$, $T_1$ and $T_2$ are one-point sets, $T_3$ has 4 elements and $T_4$ consists of 26 elements:

\smallskip

$$\includegraphics[width=200pt]{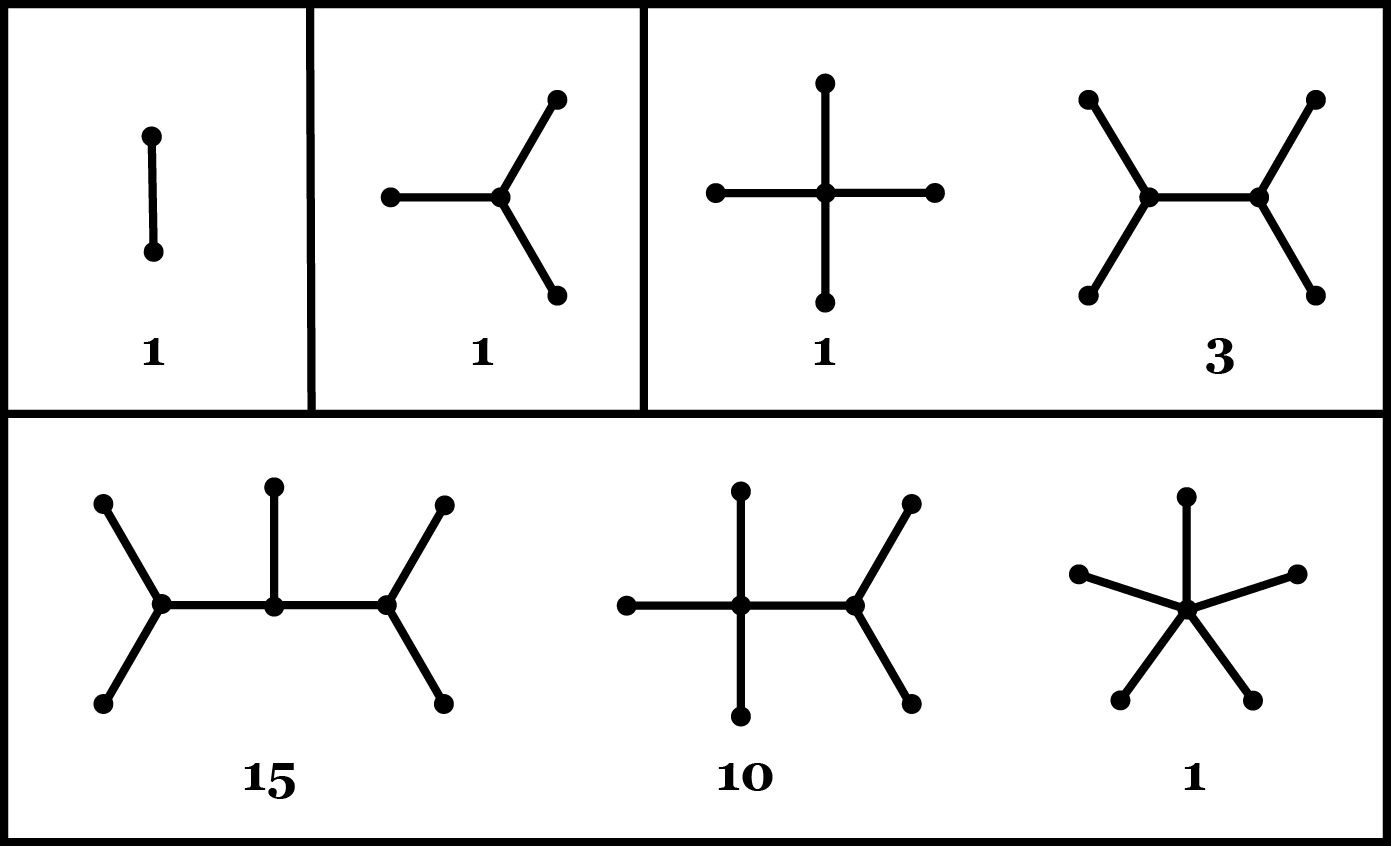}$$ 

For each $i$ between $0$ to $n$ define
$$\d_i: T_n\to T_{n-1}$$
as the map that erases the $i$th leaf and, for $j>i$, replaces the label $j$ by $j-1$. If the resulting tree has a bivalent vertex, it is simply ``smoothed out'': the vertex is deleted and the incoming edges are joined together.

\begin{thm}\label{combin}
The sets $T_n$ together with the maps $\d_i$ form a $\Delta$-set which is uniquely fillable in dimensions 5 and greater.
\end{thm}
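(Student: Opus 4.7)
The plan is to verify the simplicial identities (routine) and then establish unique fillability using the description of trees by their sets of splits. The simplicial identities $\partial_i \partial_j = \partial_{j-1} \partial_i$ for $i<j$ follow directly from the definition of the face maps: erasing leaf $j$ and then leaf $i$ produces the same tree as erasing leaf $i$ and then the leaf originally labelled $j$, which carries label $j-1$ after the first erasure.

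For uniqueness, I would use the standard fact that a tree in $T_m$ with $m \geq 3$ is determined by its \emph{quartets}, namely the trees obtained by restricting it to each $4$-element subset of leaves via iterated face maps. Given $y \in T_n$ with $n \geq 5$, for every $4$-subset $Q \subset \{0,\ldots,n\}$ the complement has size $n-3 \geq 2$, so the quartet on $Q$ is recoverable from $x_i = \partial_i(y)$ for any $i \notin Q$. Hence the tuple $(x_0,\ldots,x_n)$ determines every quartet of $y$, and thus $y$ itself.

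For existence, I would work with splits: a tree on $L = \{0,\ldots,n\}$ corresponds bijectively to a pairwise compatible set of bipartitions $A|B$ of $L$ with $|A|,|B| \geq 2$, one for each internal edge. Given a compatible tuple $(x_0,\ldots,x_n)$, define
\[
S = \bigl\{A|B : |A|,|B| \geq 2 \text{ and } (A\setminus\{i\})\,|\,(B\setminus\{i\}) \text{ is a split of } x_i \text{ or is trivial, for every } i \in L \bigr\}.
\]
The key step is pairwise compatibility of $S$: if $A_1|B_1, A_2|B_2 \in S$ had all four intersections $A_1 \cap A_2$, $A_1 \cap B_2$, $B_1 \cap A_2$, $B_1 \cap B_2$ non-empty, with distinct witnesses $a,b,c,d$, then since $|L| = n+1 \geq 6$ one can pick $i \in L \setminus \{a,b,c,d\}$; both restrictions to $L \setminus \{i\}$ remain genuine splits of $x_i$ with all four intersections still non-empty, contradicting the pairwise compatibility of the splits in the tree $x_i$. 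Hence $S$ determines a tree $y \in T_n$.

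It then remains to verify $\partial_i(y) = x_i$ for each $i$. Splits of $\partial_i(y)$ are contained in those of $x_i$ by the definition of $S$, so the point is to show every split $C|D$ of $x_i$ has a lift to $S$. The two candidate lifts are $C \cup \{i\} | D$ and $C | D \cup \{i\}$, and the correct one is dictated by the other $x_j$'s: the identity $\partial_i x_j = \partial_{j-1} x_i$ forces the lifts arising from different $x_j$'s to agree on which side leaf $i$ lies. Executing this lifting argument is the main obstacle, particularly the ``cherry'' case $|C|=2$, where the lift is not directly visible in $\partial_j$-restrictions of $x_i$ and must be pinned down by quartet-level information pooled from several $x_j$'s; the hypothesis $n \geq 5$ is precisely what provides enough such witness indices.
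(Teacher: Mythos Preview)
Your approach is genuinely different from the paper's, and the uniqueness half is clean and correct: encoding a tree by its quartets and observing that for $n\geq 5$ every $4$-subset of labels has a witness $i$ outside it is exactly the right idea, and it even gives uniqueness already for $n\geq 4$.

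The gap is in the existence half. You correctly identify that the whole content lies in the lifting step---showing that every split $C\,|\,D$ of $x_i$ lifts to a member of $S$---and you explicitly leave this step unexecuted. This is not a minor detail: for $n=4$ the compatibility conditions on $(x_0,\ldots,x_4)$ are vacuous (since $T_2$ is a point), so \emph{every} $5$-tuple in $(T_3)^5$ is compatible, while only a small fraction arise as $(\partial_\mu y)$. Thus the lifting argument must fail for $n=4$ and must use $n\geq 5$ in an essential way, which means the ``pooling quartet information from several $x_j$'' step you allude to carries real weight and cannot be waved through. In particular, when you have two indices $j_1,j_2$ that appear to vote for opposite sides, passing to $\partial_{j_2}x_{j_1}=\partial_{j_1}x_{j_2}$ does not immediately give a contradiction: the two restricted splits are nested rather than incompatible, so a further argument is needed.

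By contrast, the paper does not work with the global split set at all. It introduces a notion of \emph{adjacent} leaves (leaves whose attaching vertices coincide, or are both trivalent and joined by an edge) and shows that $t$ is determined by $\partial_\alpha t$ and $\partial_\beta t$ precisely when $\alpha,\beta$ are non-adjacent. The existence proof then finds a pair $\alpha,\beta$ non-adjacent in every $x_\mu$, glues $x_\alpha$ and $x_\beta$ over $\partial_\alpha x_\beta=\partial_\beta x_\alpha$ to produce $y$, and checks $\partial_\mu y=x_\mu$ directly from non-adjacency. When no such pair exists, the $x_\mu$ are forced to be very small (at most two internal vertices, or the exceptional $n=5$ shapes), and these cases are handled by hand. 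Your split/quartet framework is more systematic and arguably more conceptual, but the paper's constructive gluing isolates the difficulty into a short, explicit case analysis rather than leaving it inside an unexecuted lifting lemma.
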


The fact that the $T_n$ form a $\Delta$-set is clear. For the purposes of the argument which establishes  the unique fillability in dimension $n$, it will be more convenient to label the leaves of a tree with some fixed labels, rather than number them from 0 to $n$. Namely, consider a set $A$ with $n+1$ elements. We will assume that the leaves of the trees in $T_n$ are marked by distinct elements of $A$; for $\mu\in A$ we write $\d\mu$ for the operation of deleting the leaf labelled by $\mu$ followed, if necessary, by smoothing a bivalent vertex.

\medskip

Consider a tree $t\in T_n$. We will be interested in the following question: for which pairs of labels $\alpha,\beta$  can the tree $t$ be uniquely reconstructed from $\partial_\alpha t$ and $\partial_\beta t$? The answer is expressed in terms of the \emph{adjacency} of leaves in a tree. 
 
Denote by $v(\alpha)$ the vertex to which the leaf $\alpha$ of $t$ is connected. We shall call the leaves $\alpha$ and $\beta $ of the tree $t$ {adjacent} if either $v(\alpha)=v(\beta)$ or $v(\alpha)$ and $v(\beta)$ are both trivalent and connected by an edge. 
Then, the tree $t$ can be uniquely reconstructed from $\partial_\alpha t$ and $\partial_\beta t$ if and only if the leaves $\alpha$ and $\beta$ of $t$ are not adjacent. Indeed, the following three configurations of the adjacent leaves $\alpha$ and $\beta$ cannot be distinguished after one of these leaves is erased:
\smallskip

$$\includegraphics[width=150pt]{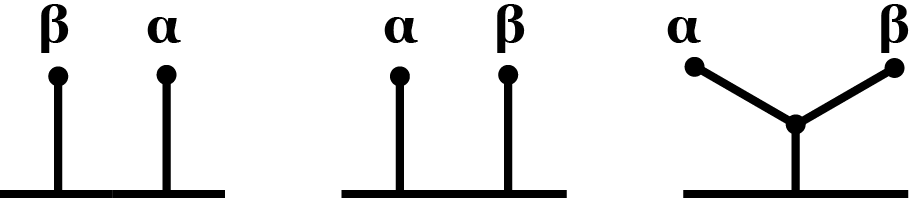}$$ 
\ 
\medskip

\noindent On the other hand, assume that $\alpha$ and $\beta$ are not adjacent in $t$ and consider the tree $\partial_\alpha\partial_\beta t$. The trees $\partial_\alpha t$ and $\partial_\beta t$ are obtained from it by adding one leaf. Each of these leaves is attached either at an internal vertex of $\partial_\alpha\partial_\beta t$ (that is, a vertex of valency greater than 1) or in the interior of an edge, say, at the midpoint. They cannot be attached at the same point since in this case the leaves $\alpha$ and $\beta$ would be adjacent in $t$; this means that both of them can be added simultaneously and the result coincides with $t$.

Now, let us proceed to the proof of the Theorem.  For $n>4$, consider a collection of $n+1$ trees $(x_\mu), \mu\in A$,  each with $n$ marked leaves, such that the leaves of $x_\mu$ have labels in $A-\{\mu\}$.  Assume that
\begin{equation}\label{sim}\d_\alpha x_\beta = \d_\beta x_\alpha\end{equation}
for all pairs of distinct $\alpha, \beta \in A$. We must prove that the exists a unique tree $y$ whose leaves are labelled by the elements of $A$, such that $x_\mu=\d_\mu y$.

Assume that for $\alpha, \beta\in A$ the leaves $\alpha$ and $\beta$ are not adjacent in the tree $x_\mu$ for some $\mu\neq\alpha,\beta$. Then, take $z=\partial_{\alpha} x_{\beta} = \partial_{\beta} x_{\alpha}$. In order to obtain $x_{\alpha}$ and $x_{\beta}$ from $z$ one has to attach the leaves $\alpha$ and $\beta$, respectively, to $z$ at two different points; hence, both of them can be added simultaneously so as to obtain an element $y\in T_n$ with $\partial_{\alpha} y = x_{\alpha}$ and $\partial_{\beta} y = x_{\beta}$. 
We have  
$$\partial_{\alpha} x_\mu = \partial_{\mu} x_{\alpha} = \partial_{\mu}\partial_{\alpha} y = \partial_{\alpha} \partial_\mu y,$$
and similarly, that $\partial_{\beta} x_\mu = \partial_{\beta} \partial_\mu y$. Since the leaves $\alpha$ and $\beta$ are not adjacent in $x_\mu$, this implies that $\partial_\mu y = x_\mu$. 

If the leaves $\alpha$ and $\beta$ are not adjacent in $x_\mu$ for \emph{each} $\mu\neq\alpha,\beta$, the existence (and uniqueness) of $y$ is established. We shall now see that for a ``generic'' solution $(x_\mu)$ of the equations (\ref{sim}) one can find such a pair of non-adjacent labels, and that in the remaining cases the graphs involved are particularly simple, and the existence of $y$ can be established directly.

We can distinguish several cases.

If for each $\mu\in A$ the graph $x_{\mu}$ has only one internal vertex, then  $x_\mu= \partial_\mu y$ where $y$ also has only one internal vertex and the leaves of $y$ are labelled by elements of $A$.

Let the maximal number of the internal vertices of the $x_{\mu}$ be two. Then, each of those $x_\mu$ that has two internal vertices, gives a decomposition of $A-\{\mu\}$ into two disjoint subsets; namely, the sets of leaves attached to each of the internal vertices. 

Assume that the labels $\alpha$ and $\beta$ belong to the same subset with respect to this decomposition of $A-\{\mu\}$ for some $\mu$. Then, it follows from the condition (\ref{sim}) that this is true for \emph{any} label $\mu \neq \alpha,\beta$ such that $x_{\mu}$ has two internal vertices. 
As a consequence, there is a well-defined decomposition of $A$ into two subsets. If $y$ is a graph with two internal vertices that corresponds to this decomposition of $A$, then we have $x_\mu= \partial_\mu y$ for all $\mu \in A$.

Now, assume that  the maximal number of internal vertices of the $x_{\mu}$ is three and $n=5$. One verifies directly that all the solutions are of the type $(\partial_\mu y)$ where $y$ is one of the following graphs: 
\smallskip

$$\includegraphics[width=400pt]{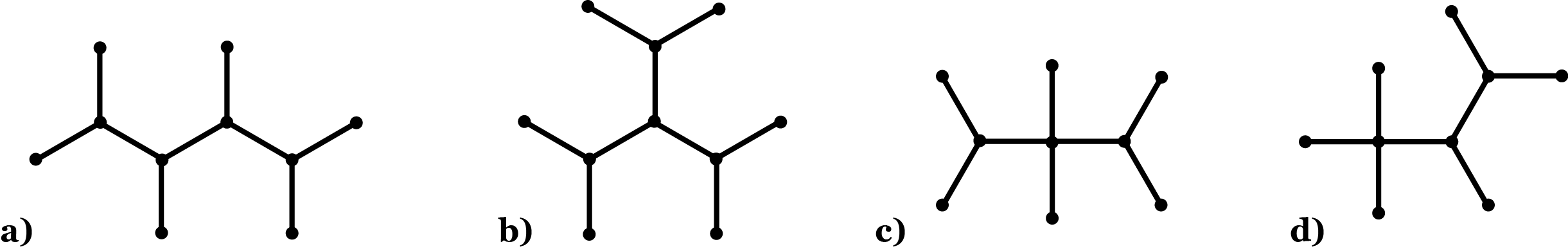}$$ 
\ 
\medskip

Finally, consider the case when at least one of the $x_{\mu}$ has more than two internal vertices and $n>5$.
In this situation, we can always find two labels $\alpha$ and $\beta$ such that the corresponding leaves are not adjacent in each $x_{\mu}$. 

Indeed, if there exist two leaves in one of the $x_\mu$ which are separated by at least 4 internal vertices, their labels correspond to non-adjacent leaves for each $\mu$. 

If any pair of leaves in each $x_\mu$ are separated by fewer than 4 internal vertices, it is sufficient to find in some $x_{\mu_0}$ two leaves $\alpha$ and $\beta$ which are separated by precisely three internal vertices, say $v_1=v(\alpha)$, $v_2$ and $v_3=v(\beta)$ so that at least one of the $v_i$ has valency 4. In this case, the labels $\alpha$ and $\beta$ are not adjacent in any of the $x_\mu$. Such $x_{\mu_0}$ can always be found. Indeed, suppose that in $x_{\mu_0}$ any pair of leaves which are separated by precisely three internal vertices, are separated by \emph{trivalent} vertices. Then, $n=6$ and the only possibility for $x_{\mu_0}$ is the graph b) on the last figure.
This, however, leads to a contradiction since in this case some other $x_{\mu_1}$ would either have a path of 4 internal vertices or a vertex of valency 4.

\section{The space of stable rational curves $\overline{\M}_{0,n}$}

The set $T_{n-1}$ of trees with $n$ marked leaves can be thought of as the combinatorial version of the Deligne-Mumford compactification $\overline{\M}_{0,n}$ of the moduli spaces of rational curves with $n$ marked points (Deligne, Mumford 1969). 

Recall that the moduli space ${\M}_{0,n}$ is the space of all configurations of $n$ distinct points on a complex projective line, considered modulo the action of the group of M\"obius transformations. It has a compactification $\overline{\M}_{0,n}$ which consists of all \emph{stable} rational curves with $n$ marked points. Such a curve  is a tree of projective lines with nodal singularities and $n$ marked points, which has no automorphisms.The marked points are assumed to be distinct from the nodes and among themselves and carry $n$ distinct labels; we may take these labels to be numbers from 0 to $n-1$. The absence of automorphisms means that each line contains at least three distinguished points; that is, either marked points or singularities. The complement to ${\M}_{0,n}$ in $\overline{\M}_{0,n}$ consists of curves with more than one irreducible component. 

For curves with fewer than 5 marked points, the moduli spaces of stable curves are very simple. When $n<4$ one defines $\overline{\M}_{0,n}$ to be a point. Assigning to a quadruple $(z_1, z_2, z_3, z_4)$ of distinct points on $\PP^1=\mathbb{C}\cup\{\infty\}$ its cross-ratio
\begin{equation}\label{cross} \frac{(z_4-z_1)(z_2-z_3)}{(z_4-z_3)(z_2-z_1)}\end{equation}
 we obtain the embedding of ${\M}_{0,4}$ into $\PP^1$ which extends to an isomorphism between $\overline{\M}_{0,4}$ and $\PP^1$.

The first non-trivial case $n=5$ is already quite interesting. In particular, the real part of $\overline{\M}_{0,5}$ is a non-orientable surface with a natural decomposition into 12 pentagons;  this led S.~Devadoss (1999) to characterize it as  ``the evil twin of the dodecahedron'' (in fact, it is a connected sum of 5 projective planes). The cohomology of $\overline{\M}_{0,n}$ for all $n$ has been computed by Keel (1992);  Etingof, Henriques, Kamnitzer and Rains (2010) described the cohomology of the real part. One can write down explicit equations for all the $\overline{\M}_{0,n}$, see the paper by Keel and Tevelev (2009). As we shall see here, one may think of the equations for arbitrary $\overline{\M}_{0,n}$ as ``simplicial consequences'' of the equations for $\overline{\M}_{0,5}$.

\medskip

For each label $i$ there is a forgetful morphism 
\[\partial_i: \overline{\M}_{0,n}\to \overline{\M}_{0,n-1}
\] 
which consists in:
\begin{enumerate}
\item erasing the point marked by $i$ and, for each $j>i$, replacing the label $j$ by $j-1$; 
\item collapsing the component with only two distinguished points if such a component appears after the previous step.
\end{enumerate}

The forgetful morphisms satisfy the simplicial identities:
\[
\partial_i \circ \partial_j = \partial_{j-1} \circ \partial_i
\]
for all pairs of labels $i<j$, and, therefore, the spaces $\overline{\M}_{0,n}$ form a $\Delta$-set (with the space $\overline{\M}_{0,n}$ being the set of $(n-1)$-simplices). 

\begin{thm}\label{modulo}
The sets $\overline{\M}_{0,n}$,  together with the maps $\d_i$ form a $\Delta$-set which is uniquely fillable in dimensions 5 and greater.
\end{thm}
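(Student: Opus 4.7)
My approach is to bootstrap from the combinatorial Theorem~\ref{combin}. Every stable curve $y\in\overline{\M}_{0,n+1}$ has a well-defined \emph{dual tree} $\tau(y)\in T_n$ recording its combinatorial type (internal vertices $\leftrightarrow$ irreducible components, leaves $\leftrightarrow$ marked points, internal edges $\leftrightarrow$ nodes), and the forgetful morphisms respect this correspondence: $\tau(\partial_i y)=\partial_i\tau(y)$. Given simplicially compatible data $(x_0,\ldots,x_n)$ with $x_i\in\overline{\M}_{0,n}$, the types $\tau(x_i)\in T_{n-1}$ themselves satisfy the hypothesis of Theorem~\ref{combin}, which yields a unique $\tau\in T_n$ filling them. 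This $\tau$ is forced to be the combinatorial type of any filling $y$.

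\textbf{Within the stratum.} Fixing $\tau$, specifying $y$ reduces to specifying the moduli of each irreducible component $C_v$ at an internal vertex $v$ of valence $k=|D_v|\geq 4$ (vertices of valence $3$ carry no moduli). For such a $v$, choose four half-edges $h_1,\ldots,h_4\in D_v$ and pick representative labels $a_i\in A:=\{0,\ldots,n\}$ in the branch of $\tau$ beyond $h_i$. I claim that forgetting every label outside $\{a_1,\ldots,a_4\}$ in $y$ and iteratively smoothing bivalent vertices contracts each branch at $h_i$ to a single marked point on $C_v$ at the location of $h_i$, while collapsing all remaining branches entirely; the resulting element of $\overline{\M}_{0,4}=\PP^1$ is therefore the cross-ratio of the four points of $C_v$ singled out by $h_1,\ldots,h_4$. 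Since $|A|\geq 6$, one can pick some $\mu\notin\{a_1,\ldots,a_4\}$, and this cross-ratio equals that of the same four labels computed from $x_\mu$ by further forgetful steps. The identities $\partial_\mu x_\nu=\partial_\nu x_\mu$ make the value independent of the choice of $\mu$ and of the representatives $a_i$, so the moduli of every $C_v$, and hence the unique candidate $y$, is completely determined by the data. Existence follows by running the construction in reverse: assemble $y$ with combinatorial type $\tau$ and moduli on each $C_v$ prescribed by these cross-ratios, then verify $\partial_\mu y=x_\mu$ by matching dual trees (both equal $\tau(x_\mu)$) and all cross-ratios (equal by the same identity applied with $\nu=\mu$).

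\textbf{Main obstacle.} The delicate point is the stabilization claim inside the cross-ratio identification: one must verify carefully that forgetting every label outside a chosen four-element subset really collapses the whole tree down to $C_v$ with marked points precisely at the positions of $h_1,\ldots,h_4$, with no extraneous surviving components. This is an inductive bookkeeping about iterated smoothing of bivalent vertices, analogous in spirit to the combinatorial manipulation used in the proof of Theorem~\ref{combin}. Once this is in place, the combinatorial rigidity from that theorem, together with the classical fact that cross-ratios of four marked points coordinatize the moduli of a smooth rational curve, closes the argument.
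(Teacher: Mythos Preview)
Your argument is correct, but it takes a genuinely different route from the paper's. The paper does not invoke Theorem~\ref{combin} as a black box; instead it \emph{re-runs} the proof of Theorem~\ref{combin} for $\overline{\M}_{0,n}$, observing that the key obstruction---the adjacency of two leaves---is actually weaker for decorated trees: a point $x\in\overline{\M}_{0,n}$ can be recovered from $\partial_\alpha x$ and $\partial_\beta x$ whenever it is \emph{not} the case that $v(\alpha)$ and $v(\beta)$ are both trivalent and joined by an edge. (The remaining adjacency case $v(\alpha)=v(\beta)$, which is ambiguous for bare trees, is resolved by the continuous moduli on that component.) Given this, the search for a good pair $\alpha,\beta$ in the proof of Theorem~\ref{combin} succeeds \emph{a fortiori}, so the entire argument transplants verbatim and the proof is a few lines.

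Your approach instead separates the combinatorial and continuous layers: Theorem~\ref{combin} fixes the dual tree $\tau$, and then the moduli at each vertex of valence $\geq 4$ is read off from cross-ratios obtained by composing forgetful maps down to $\overline{\M}_{0,4}$, using $|A|\geq 6$ to always have a spare index $\mu$ available. This is longer but more modular, and it makes explicit the classical fact (underlying Kapranov's and Keel--Tevelev's descriptions) that the forgetful maps to $\overline{\M}_{0,4}$ coordinatize each stratum. The ``main obstacle'' you flag---that forgetting all but four well-chosen labels collapses the curve to the single component $C_v$ marked at $h_1,\dots,h_4$---is a routine induction on the number of forgotten labels, and the consistency of the resulting cross-ratios (so that they assemble to an honest point of $\M_{0,k}$ rather than a boundary point) follows because for any five half-edges at $v$ one can choose $\mu$ avoiding all five representatives and read everything off a single $x_\mu$.
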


This, in particular, means that the simplicial identities can be thought of the equations for $\overline{\M}_{0,n}$ in a product of $n$ copies of $\overline{\M}_{0,n-1}$ for $n>5$.

\begin{proof}

The space $\overline{\M}_{0,n}$ can be subdivided into strata indexed by the elements of $T_{n-1}$; see (Kock, Vainsencher 2007). Namely, a point in $\overline{\M}_{0,n}$ is uniquely specified by a tree in $T_{n-1}$ each of whose $k$-valent internal vertices is labelled by a configuration in $\M_{0,k}$; the labels of the points of each configuration are the edges emanating from the corresponding vertex. Note that, since $\M_{0,k}$ is a one-point space for $k<4$, the difference between $\overline{\M}_{0,n}$ and $T_{n-1}$ consists in the labels at the vertices of valency 4 and more.

The effect of the map $\partial_\alpha$ on $\overline{\M}_{0,n}$ amounts to that of $\partial_\alpha$ on $T_{n-1}$ together with forgetting the corresponding point in $\M_{0,k}$ for the vertex $v(\alpha)$ when $v(\alpha)$ is at least 4-valent. The question whether a point $x\in \overline{\M}_{0,n}$ can be uniquely reconstructed from $\partial_\alpha(x)$ and $\partial_\beta(x)$ has a somewhat simpler answer than in $T_{n-1}$: this can be always be done uniquely unless $v(\alpha)$ and $v(\beta)$ are both trivalent and connected by an edge. Other than this, no changes are necessary in the proof of Theorem~\ref{combin} in order to adapt it for $\overline{\M}_{0,n}$.
\end{proof}

In fact, the embedding of $\overline{\M}_{0,5}$ into $(\PP^1)^5$ defined as the product $\d_0\times\ldots\times\d_4$ is also injective, although its image is not given by the simplicial identities alone (which are trivial in this case). The following is well-known:

\begin{prop}\label{mfive}
The image of $\overline{\M}_{0,5}$ in $(\PP^1)^5$ is the non-singular surface given by the equations
\begin{eqnarray*}
a_1(a_4 b_5-a_5 b_4)&=&b_1 b_5 (a_4 - b_4)\\
a_2(a_4 b_5-a_5 b_4)&=&b_2 a_4 b_5\\
a_3(a_4 b_5-a_5 b_4)&=&b_3 a_4 (b_5- a_5),
\end{eqnarray*}
where $[a_k: b_k]$, for $1 \leq k\leq 5$, are the homogeneous coordinates in the $k$th copy of $\PP^1$.
\end{prop}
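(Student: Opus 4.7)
The plan is to verify the three equations directly on a dense affine chart of $\overline{\M}_{0,5}$, establish injectivity of the morphism $\phi := \d_0 \times \cdots \times \d_4$, and then check that the subvariety $V \subset (\PP^1)^5$ cut out by the equations is smooth of dimension two; together these will force $V$ to coincide with $\phi(\overline{\M}_{0,5})$.

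For the equations, use the $\mathrm{PGL}_2$-action to normalize three of the marked points on a smooth curve to $0$, $1$, $\infty$, so that $\M_{0,5}$ acquires affine coordinates $(u, v) \in \mathbb{C}^2$ with $u, v \notin \{0, 1\}$ and $u \neq v$. Applying the cross-ratio formula (\ref{cross}) to each of the five $4$-tuples obtained by erasing one marked point yields five explicit rational functions of $(u, v)$; plugging these into each of the three displayed equations and clearing denominators reduces every equation to a polynomial identity in $u$ and $v$. Since $\M_{0,5}$ is Zariski-dense in $\overline{\M}_{0,5}$ and $\phi$ is a morphism of projective varieties, $\phi(\overline{\M}_{0,5}) \subseteq V$.

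For injectivity, note that in the coordinates above two of the five cross-ratios are literally $u$ and $v$, so $\phi|_{\M_{0,5}}$ is injective. On the boundary, the combinatorial types of stable $5$-pointed rational curves are few -- the ten $(2,3)$-partitions and the fifteen three-component ``caterpillars'' -- and each is detected by the pattern of which coordinates of $\phi$ degenerate to $[0:1]$, $[1:1]$, or $[1:0]$; a short case analysis finishes the job. Alternatively, one can invoke the observation used in the proof of Theorem \ref{combin}: any pair $\d_\alpha \times \d_\beta$ with the leaves $\alpha, \beta$ non-adjacent in the dual tree already distinguishes the corresponding boundary point from its neighbors.

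Finally, work through the standard affine charts of $(\PP^1)^5$ and compute the $3 \times 5$ Jacobian of the three equations; the rank turns out to be $3$ at every point of $V$, so $V$ is smooth of dimension $2$, and a connectedness check (e.g.\ by using the explicit parametrization above together with the fact that each boundary stratum limits into its closure) makes $V$ irreducible. Since $\phi(\overline{\M}_{0,5})$ is then a closed irreducible $2$-dimensional subset of the smooth irreducible projective surface $V$, the two coincide, and the bijective morphism $\phi$ between smooth projective surfaces is an isomorphism by Zariski's main theorem. The main obstacle I foresee is the Jacobian calculation at the boundary points of $V$, where several factors $[a_k:b_k]$ simultaneously land on $\{a_k = 0\}$, $\{b_k = 0\}$, or $\{a_k = b_k\}$: a naively chosen global affine chart will produce vanishing denominators there, and one must choose the chart in each factor separately, matched to the stratum, for the full-rank conclusion to come out cleanly.
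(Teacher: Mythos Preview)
Your overall strategy is sound but organized differently from the paper's. For the forward inclusion $\phi(\overline{\M}_{0,5})\subseteq V$ you and the paper do the same thing. For the reverse inclusion and for smoothness, however, the paper is constructive: given $(c_1,\ldots,c_5)\in V$ it reads off the dual tree from the pattern of which $c_i$ lie in $\{0,1,\infty\}$ and recovers the moduli of the components from the remaining $c_i$, thereby exhibiting an explicit inverse to $\phi$; smoothness is then obtained from explicit charts of the type $(x,y)\mapsto(\text{five cross-ratios of }(0,1,\infty,x,y))$ rather than from a Jacobian computation. Your route trades these constructions for a dimension count and Zariski's main theorem, which is cleaner in principle but leaves you with exactly the boundary Jacobian verification you flag as the main obstacle.

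There is one genuine weak point. Your irreducibility argument for $V$ is circular: ``using the explicit parametrization above'' only shows that $\phi(\overline{\M}_{0,5})$ is connected, not that $V$ has no further components disjoint from the image, so the step ``closed irreducible $2$-dimensional subset of the smooth irreducible surface $V$ equals $V$'' is not yet justified. An easy fix: each of the three equations expresses $[a_i:b_i]$, $i=1,2,3$, rationally in terms of $[a_4:b_4]$ and $[a_5:b_5]$, so the projection of $V$ to the last two $\PP^1$ factors is birational and $V$ is irreducible. With that repair your argument goes through.
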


\begin{proof} 
For a point on ${\M}_{0,5}$, that is, an ordered quintuple $z=(z_1,z_2,z_3,z_4,z_5)$ of distinct points on $\PP^1$, we have that $a_i/b_i\in \mathbb{C}\cup\{\infty\}$ is the cross-ratio of the quadruple obtained by omitting $z_i$ from $z$; verifying the above equations is a straightforward matter. Since ${\M}_{0,5}$ is open in $\overline{\M}_{0,5}$, these equations are also satisfied on the image of $\overline{\M}_{0,5}$.

Conversely, if a point  $c=(c_1,c_2,c_3,c_4,c_5)$ of $(\PP^1)^5$ satisfies these equations, the corresponding  curve $x\in\overline{\M}_{0,5}$ can be reconstructed as follows. The number of projective lines of $x$ is: one if none of the $c_i$ is 0, 1 or $\infty$, two if exactly three of the $c_i$ are 0, 1 or $\infty$, and three if all of the $c_i$ are equal to 0, 1 or $\infty$. The entries equal to $0,1$ or $\infty$ determine the combinatorics of the marked tree and the $c_i$ different from 0,1 and $\infty$ gives in each case the cross-ratios of the marked points in each projective line.

The image of $\overline{\M}_{0,5}$ can be covered by explicit non-singular charts obtained by fixing three of the five points on $\PP^1$ to be $0,1,\infty$. For instance, ordered quintuples of the form $(0,1,\infty, x, y)$  with $x,y\in\mathbb{C}$ define the chart
$$(x,y) \mapsto \left(\frac{y-1}{y-x}, \frac{y}{y-x}, \frac{y(1-x)}{y-x}, y, x\right);$$
the other charts differ by the indices of the fixed points. 
\end{proof}

The equations for $\overline{\M}_{0,5}$, together with the simplicial identities, produce the equations for all the $\overline{\M}_{0,n}$. For instance, consider the case $n=6$. The moduli space $\overline{\M}_{0,6}$ is a subvariety of 
$$(\overline{\M}_{0,5})^6\subset ((\PP^1)^{5})^6.$$
Denote the $[a_{ij} : b_{ij}]$, where 
$1\leq i\leq 5$ and $1\leq j\leq 6$, the homogeneous coordinates in $(\PP^1)^{30}$, with the index $j$ being the number of the copy of $\overline{\M}_{0,5}$ and $i$ the number of the coordinate in the corresponding copy of $(\PP^1)^5$. The simplicial identities give rise to the equalities
$$[a_{ij} : b_{ij}]=[a_{(j-1)i} : b_{(j-1)i}]$$
whenever $i<j$. Therefore, the complete set of equations for $\overline{\M}_{0,6}$ in $(\PP^1)^{30}$ is
\begin{eqnarray*}
a_{1j}(a_{4j} b_{5j}-a_{5j} b_{4j})&=&b_{1j} b_{5j} (a_{4j} - b_{4j})\\
a_{2j}(a_{4j} b_{5j}-a_{5j} b_{4j})&=&b_{2j} a_{4j} b_{5j}\\
a_{3j}(a_{4j} b_{5j}-a_{5j} b_{4j})&=&b_{3j} a_{4j} (b_{5j}- a_{5j})\\
a_{ij}b_{(j-1)i} &= &a_{(j-1)i}b_{ij}
\end{eqnarray*}
where $i,j$ vary over the set $1\leq i< j \leq 6$.

\section{Other examples}

There are other varieties similar to the moduli spaces of stable rational curves whose points can be thought of as trees with marked leaves and ``decorations'' at the internal vertices. The two principal examples are two compactifications of the configuration space $F_n(X)$ of $n$ distinct points on an algebraic variety $X$: namely, the Fulton-MacPherson compactification $X[n]$ (Fulton, Macpherson 1994), and Ulyanov's (2002) polydiagonal compactification $X\langle n\rangle$.


The configuration space $F_n(X)$ is defined as the complement in $X^n$ to the union of all the diagonals $z_i=z_j$. The spaces $F_n(X)$ form a $\Delta$-set: the map $\d_i$ erases the $i$th point in the configuration. It is easy to see that this $\Delta$-set is uniquely fillable in dimensions two and higher.

A point in $X[n]$ is a collection $(z_1,\ldots,z_n)\in X^n$ together with additional data: if two or more of the $z_i$ coincide at a point $z\in X$, one specifies a \emph{screen} at $z$. Denote by $I\subseteq \{1,\ldots,n\}$ the set of indices of the $z_i$ which coincide with $z$. A screen at $z$ is a configuration of points, labelled by the set $I$ and not all equal to each other, in the tangent space $T_z X$; it is considered up to a translation and a multiplication by a nonzero scalar. If, in turn, some of the points in the screen coincide, one specifies another screen which corresponds to the set of coinciding points, and the procedure is iterated until in some screen all the points corresponding to different indices are distinct (Fulton, Macpherson, 1994, page 191).
The map $\partial_i$ extends from $F_n(X)$ to $X[n]$: it erases $z_i$ from $(z_1,\ldots,z_n)$ and deletes the corresponding points from all the screens; if the index $i$ happens to occur in some screen with only two labels, this screen is also erased.
It is clear that the $\d_i$ satisfy the simplicial identities.

\begin{prop}\label{FM}
The spaces $X[n]$ form a $\Delta$-set which is uniquely fillable in dimensions three and greater. 
\end{prop}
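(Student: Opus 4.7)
The plan is to adapt the proof of Theorem~\ref{combin} to accommodate the additional data carried by the screens. A point of $X[n]$ is described by a rooted tree $t$ whose leaves are labeled $1,\ldots,n$ and each of whose internal vertices is decorated by a \emph{screen} (a configuration of points indexed by that vertex's children, considered modulo translation and nonzero scalar multiplication); the number of children at each internal vertex is at least two. The forgetful map $\partial_\alpha$ deletes the $\alpha$-th entry from every screen containing it and contracts any internal vertex whose screen drops below two entries, in direct analogy with the smoothing of bivalent vertices in $T_n$. The simplicial identities are then immediate from this description.

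The key step is to determine, for $x\in X[n]$, the pairs of leaves $(\alpha,\beta)$ for which $x$ is uniquely recovered from $\partial_\alpha x$ and $\partial_\beta x$. Following Theorem~\ref{combin}, I would declare $\alpha,\beta$ \emph{adjacent} in $x$ if either the screen information at $v(\alpha)$ and $v(\beta)$ is destroyed by both forgetful maps, or the local tree structure around these vertices becomes ambiguous after the corresponding smoothings; a short case check identifies the adjacent configurations as those in which the two leaves share an internal vertex of valency 2 or 3, or sit on two connected internal vertices of valency 2. For a non-adjacent pair one takes $z=\partial_\alpha x_\beta=\partial_\beta x_\alpha$ and re-inserts $\alpha,\beta$ simultaneously, exactly as in the combinatorial argument: the only new ingredient is the verification that at a common vertex of valency $\geq 4$ the screen can be reconstructed from its two point-deleted versions, which share at least two common points and therefore pin down the translation and scaling gauge.

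For unique fillability in dimensions $\geq 3$, given a compatible family $(x_\mu)_{\mu\in A}$ with $|A|\geq 4$, I would search for a pair $(\alpha,\beta)$ non-adjacent in every $x_\mu$. A short combinatorial argument shows that such a pair exists outside of a short list of degenerate combinatorial types (essentially trees consisting of a single internal vertex with all the leaves, or a short chain of bivalent vertices), and in each of these degenerate cases one builds $y$ by hand using the entire collection of face images, exploiting the fact that any screen not touched by a given $\partial_\mu$ is transported unchanged into $x_\mu$. The main obstacle will be the exhaustive verification of the adjacency criterion and the dispatch of the degenerate cases; the drop in the threshold from the $5$ of Theorem~\ref{modulo} down to $3$ reflects that the screens at low-valency vertices of $X[n]$ carry more information than the correspondingly trivial moduli $\M_{0,3}$ of $\overline{\M}_{0,n}$, so that only configurations with very few leaves can fail to offer a non-adjacent pair.
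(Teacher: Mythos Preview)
Your overall plan---transport the adjacency argument from Theorem~\ref{combin}, reconstruct $y$ from a non-adjacent pair, and dispatch the residual degenerate types by hand---is exactly the route the paper sketches. The gap is in your combinatorial model for $X[n]$, and it is not cosmetic.

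A point of $X[n]$ is not a single rooted tree whose leaves are labeled $1,\ldots,n$; it is a \emph{forest} of rooted trees with no bivalent vertices, whose univalent roots are marked by the distinct values occurring among $(z_1,\ldots,z_n)\in X^n$, while the remaining leaves carry the labels $1,\ldots,n$. You have suppressed both the underlying point of $X^n$ and the possibility of several components. The structural feature you thereby lose is that the face maps $\partial_i$ delete only the numbered leaves and never the roots: every tree in the forest retains at least one leaf under every $\partial_\mu$. It is this permanently anchored root that drives the drop in the fillability threshold from~$5$ down to~$3$; in the extreme case of a single tree it simply plays the role of an additional, never-removed leaf in the adjacency analysis of Theorem~\ref{combin}. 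Your alternative explanation---that screens at low-valency vertices carry more information than $\M_{0,3}$---is not the mechanism the paper uses: the paper records that a screen with two points is unique, so trivalent internal vertices are unlabeled here just as in $\overline{\M}_{0,n}$. Your adjacency criterion (phrased in terms of ``valency $2$ or $3$'' internal vertices, which do not occur once bivalent vertices are excluded) and your list of degenerate cases both need to be redone with the forest model and the undeletable roots in hand.
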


This result should not be surprising: for instance, Fulton and MacPherson (1994) explicitly point out that $X[n]$ form a $\Delta$-set (without using this terminology) and that $X[n]$ is a subvariety in a product of several copies of $X[2]$ and $X[3]$.

The proof (whose details we omit) is very similar to the case of $\overline{\M}_{0,n}$. Indeed, a point of $X[n]$ can be represented by a forest of rooted trees with no bivalent vertices. The roots are univalent and marked  by distinct points of $X$; the rest of the leaves are numbered from 1 to $n$; each internal vertex carries a label corresponding to a screen. The points on the screen at any internal vertex are labelled by the outgoing edges, assuming that every edge is oriented away from the root. Again, since a screen with two points in it is unique, it is sufficient to consider the labels only for the internal vertices of valency at least 4. 

\medskip

The points that are added to $F_n(X)$ in the construction of $X[n]$ carry the data that record the directions and the hierarchy of the collisions of several points. The polydiagonal compactification is a generalization of the Fulton-MacPherson compactification that allows to record, in addition, the velocities of collisions among several collisions. A point in $X\langle n\rangle$ is given by a forest of rooted trees as in the construction of $X[n]$, with the following differences: 
\begin{enumerate}
\item
there is a total order on the set of internal vertices which can be expressed by a \emph{level} function which increases in the direction away from the root; 
\item
for each screen, a non-zero real \emph{scale factor} is given;
\item 
the screens, rather than being considered up to up to a translations and dilatations have a finer equivalence on them; namely, one is allowed to
\begin{enumerate}
\item apply a translation to all the points in one screen;
\item apply a dilatation by a non-zero real $\lambda$ 
of all the points in  one screen and, at the same time, multiply its scale factor by $\lambda^{-1}$;
\item multiply the scale factor of all the screens on the same level by a non-zero number.
\end{enumerate}
\end{enumerate}
Then, again, we have the forgetful maps $\d_i: X\langle n\rangle\to X\langle n-1\rangle$ which satisfy the simplicial identities.

\begin{prop}\label{U}
The $X\langle n\rangle$ form a $\Delta$-set, uniquely fillable in dimensions four and greater.
\end{prop}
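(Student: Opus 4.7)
My plan is to follow the template of Proposition \ref{FM}, adding two reconstruction steps for the extra data carried by $X\langle n\rangle$ beyond $X[n]$: the level function on internal vertices, and the scale factors on screens, both considered modulo the equivalences (a), (b), (c). In the regime of interest the filling tuple $(x_\mu)_{\mu\in A}$ has $|A|\geq 5$.

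First, I would compose each $x_\mu$ with the natural forgetful $\Delta$-map $\pi\colon X\langle m\rangle \to X[m]$ that discards the level and scale data and coarsens the screen equivalence. The tuple $(\pi(x_\mu))$ satisfies the simplicial identities for $X[n]$, and Proposition \ref{FM} (applicable since the dimension is at least $3$) produces a unique $y_0\in X[n]$ with $\partial_\mu y_0 = \pi(x_\mu)$ for all $\mu$. This fixes the underlying forest-with-screens of the sought-for $y$.

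Next, I would recover the level function on the internal vertices of $y_0$ by pairwise comparison. For any two internal vertices $u,v$ of $y_0$, I would seek $\mu\in A$ such that both $u$ and $v$ survive under $\partial_\mu$. A vertex of valency $\geq 4$ is never collapsed by a single $\partial_\mu$; a trivalent vertex is collapsed only when $\mu$ is the unique labeled leaf lying in one of its two outgoing subtrees. Each vertex therefore has a restricted set of ``killing'' labels, and since $|A|\geq 5$, two such killing sets cannot exhaust $A$. The relative order of $u$ and $v$ in $y$ is then read off from the level function of $x_\mu$, and the simplicial identities enforce pairwise consistency, yielding a well-defined total order on the internal vertices of $y$.

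Finally, I would reconstruct the scale factors. For each internal vertex $v$ of $y_0$, its scale factor is inherited from any $x_\mu$ in which $v$ survives, modulo the $\mathbb{R}^*$-ambiguities (b) and (c). The strategy is to fix a reference internal vertex on each level of $y$ and use the freedom (c) within each $x_\mu$ to normalize the scale factor at the corresponding reference vertex, thereby pinning down all remaining scale factors in that $x_\mu$. I expect the main obstacle to lie precisely here: because (c) is a global, per-level operation, the normalizations chosen independently in different $x_\mu$'s must be verified to be compatible. This should reduce to checking that any two $x_\mu, x_\nu$ sharing a reference vertex assign it the same scale factor up to the level-wise $\mathbb{R}^*$-freedom, and that the identity $\partial_\alpha x_\beta = \partial_\beta x_\alpha$ propagates these normalizations coherently through the tuple --- an elementary argument once the combinatorial structure from the first step is in place.
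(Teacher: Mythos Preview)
The paper does not actually supply a proof here. The paragraph following the proposition only explains why the bound is four rather than three: it exhibits two points of $X\langle 4\rangle$ sharing the same underlying forest but with different scale-factor data, which become indistinguishable under every $\partial_i$ because each face map collapses one of the two screens; it then simply asserts that ``this problem does not arise when $n>4$.'' Your outline is thus far more detailed than anything the paper provides, and there is nothing substantive to compare it against.

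Your three-step plan --- project to $X[n]$ and invoke Proposition~\ref{FM} to fix the decorated forest, then rebuild the level preorder, then the scale factors --- is natural and should succeed. Two refinements are worth flagging. In Step~2, your bound of at most two killing labels per trivalent vertex is correct and $|A|\ge 5$ does guarantee a witness $x_\mu$ for any pair $u,v$; but ``the simplicial identities enforce pairwise consistency'' is not immediate: two witnesses $x_\mu,x_{\mu'}$ need not be directly comparable through $\partial_{\mu'}x_\mu=\partial_\mu x_{\mu'}$, since $u$ can die there (e.g.\ if $u$ is trivalent with a trivalent child whose two leaf-children are exactly $\mu$ and $\mu'$), so one must interpolate via a third witness. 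In Step~3, an individual scale factor has no invariant meaning --- only ratios along a common level do --- so the task is really to recover, for each same-level pair $u,v$, their ratio from some $x_\mu$ retaining both; this is again the $|A|\ge 5$ versus at-most-four-killing-labels count, and is exactly where the $n=4$ example fails. With these adjustments your argument is the proof the paper omits.
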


Here, the unique fillability dimension four, as opposed to three in the Fulton-MacPherson case, is due to the presence of the scale factors. For instance, consider two points of $X\langle 4\rangle$ which correspond to
the forest 
\[\includegraphics[width=100pt]{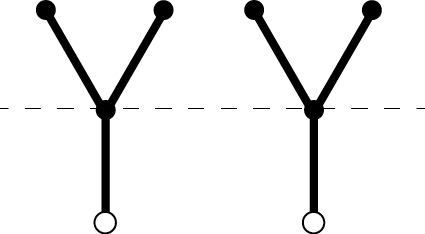}\] 
with the same markings of roots and leaves but with different (even after any rescaling) scale factors.
These points will map to the same elements in $X\langle 3\rangle$ under each $\d_i$, since erasing any leaf destroys the scale factors. It can be seen that this problem does not arise when $n>4$.

\subsection*{Acknowledgements}
This note grew out of the MSc thesis of the first named author. We would like to thank Vladmir Dotsenko for comments. 


\begin{thebibliography}{999}


\bibitem{} P.~Deligne, D.~Mumford, The irreducibility of the space of curves of given genus,
Publications math\'ematiques de l'IH\'ES, {36}, 75--109 (1969).

\bibitem{} S.~Devadoss, Tessellations of Moduli Spaces and the Mosaic Operad, Contemporary Mathematics {239}, 91--114 (1999).

\bibitem{} P.~Etingof, A.~Henriques, J.~Kamnitzer, E.~Rains,  The cohomology ring of the real locus of the moduli space of stable curves of genus 0 with marked points, Annals of Mathematics {171}, 731--777 (2010).

\bibitem{} W.~Fulton, R.~MacPherson, A compactification of configuration spaces, Annals of Mathematics {139}, 183--225 (1994).

\bibitem{} S.~Keel, Intersection theory of moduli space of stable $n$-pointed curves of genus zero, Transactions of the American Mathematical Society {330}, 545--574 (1992). 

\bibitem{} S.~Keel, J.~Tevelev, Equations for $\overline{M}_{0,n}$, {International  Journal of Mathematics}\ {20}, 1159-1184 (2009).

\bibitem{} J.~Kock, I.~Vainsencher, An invitation to quantum cohomology, 159 p. Birkh\"auser Boston, Inc., Boston (2007).

\bibitem{} J.P.~May, Simplicial objects in algebraic topology, D.\ Van Nostrand Co., Inc., Princeton, N.J.-Toronto, Ont.-London, 1967.

\bibitem{} C.~P.~Rourke, B.~J.~Sanderson, $\Delta$-sets I: homotopy theory, Quarterly Journal of Mathematics, 22,  321--338 (1971).

\bibitem{} A.P.~Ulyanov, Polydiagonal compactification of configuration spaces, {Journal of Algebraic Geometry}\ {11}, 129--159 (2002).

\bibitem{} C.~Weibel, An introduction to homological algebra, 450 p. Cambridge University Press, Cambridge (1994). 



\end{thebibliography}


\end{document}